\newtheorem{theorem}{Theorem}
\newtheorem{lemma}{Lemma}
\newtheorem{result}{Result}
\begin{document}

\title{{\bf Five Exponential Diophantine Equations and Mayhem Problem M429}}
\author{Konstantine Zelator\\
Department of Mathematics, Computer Science and Statistics\\
212 Ben Franklin Hall\\
Bloomsburg University of Pennsylvania\\
400 East Second Street\\
Bloomsburg, PA  17815\\
USA\\
and\\
P.O. Box 4280\\
Pittsburgh, PA  15203\\
e-mails: konstantine\underline{\ }zelator@yahoo.com\\
and kzelator@bloomu.edu}
\maketitle

\section{Introduction}

In the March 2010 issue of the journal {\it Crux Mathematicorum with
  Mathematical Mayhem}, mayhem  problem M429 was proposed (see reference \cite{1}):

\vspace{.15in}

Determine all positive integers $a,b,c$ that satisfy,

$$
\begin{array}{rcll} a^{(b^c)} & = &  (a^b)^c; & {\rm or\ equivalently}\\
\\
a^{b^c} & = & a^{bc}.
\end{array}
$$

\noindent A solution, by this author, was published in the December 2010
issue of {\it Crux Mathematicorum with Mathematical Mayhem} (see
\cite{2}).  According to this solution, the following ordered triples of
positive integers $a,b,c$ are precisely those that satisfy the above exponentialequation:

\vspace{.15in}

\noindent The triples of the form $(1,b,c)$, with $b,c$ being any positive
integers;

\noindent the triples of the form $(a,b,1)$, with $a,b$ positive integers and
with $a \geq 2$;

\noindent and the triples of the form $(a,2,2)$ with $a \in {\Bbb Z}^+$, and $a
\geq 2$.

\vspace{.15in}

In the language of diophantine equations, we are dealing with the
three-variable diophantine equation

\begin{equation}
x^{(y^z)} = x^{yz}. \label{E1}
\end{equation}

\noindent Accordingly, the above results can be expressed in Theorem 1 as
follows.

\begin{theorem}  Consider the three-variable diophantine equation, $x^{(y^z)} =
  x^{yz}$, over the set of positive integers ${\Bbb Z}^+$.  If $S$ is the
  solution set of the above diophantine equation, then $S = S_1 \bigcup S_2
  \bigcup S_3$, where $S_1,S_2,S_3$ are the pairwise disjoint sets,

$$
\begin{array}{rcl} S_1 & = & \left\{ \left.(1,b,c)\right| b,c \in {\Bbb Z}^+
  \right\};\\
\\
S_2 & = & \left\{\left. (a,b,1)\right| a \geq 2, a,b \in {\Bbb Z}^+\right\};\\
\\
S_3 & = & \left\{ \left.(a,2,2)\right| a \geq 2\ {\rm and}\ a \in {\Bbb
  Z}^+\right\}.
\end{array}
$$
\end{theorem}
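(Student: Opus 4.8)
The plan is to separate the analysis according to the value of the base $x$, since the behavior of the equation depends critically on whether $x=1$ or $x\geq 2$. First I would dispose of the case $x=1$: here both sides equal $1$ regardless of the exponents, so every triple $(1,b,c)$ with $b,c\in{\Bbb Z}^+$ is a solution, which accounts for exactly $S_1$. Then I would treat the case $x\geq 2$. For a fixed base $a\geq 2$, the function $n\mapsto a^n$ is strictly increasing on the positive integers, so $a^{(b^c)}=a^{bc}$ holds if and only if the exponents agree, namely
$$
b^c = bc.
$$
This reduces the three-variable problem to a clean two-variable exponential equation that no longer involves $a$.

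Next I would solve $b^c = bc$ over ${\Bbb Z}^+$ by examining small values of $b$ and $c$. If $c=1$, the equation becomes $b=b$, which holds for every $b$, yielding all triples $(a,b,1)$ with $a\geq 2$; this is precisely $S_2$. If $b=1$, the equation becomes $1=c$, forcing $c=1$, so this produces nothing beyond what $S_2$ already contains. It therefore remains to analyze the case $b\geq 2$ and $c\geq 2$. Dividing $b^c=bc$ by $b$ gives the equivalent equation
$$
b^{\,c-1} = c,
$$
and the entire problem now hinges on determining its solutions.

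The main obstacle is showing that $b^{\,c-1}=c$ with $b\geq 2,\ c\geq 2$ has the single solution $(b,c)=(2,2)$. The key estimate is that $b^{\,c-1}\geq 2^{\,c-1}$, so any solution must satisfy $2^{\,c-1}\leq c$. I would establish the inequality $2^{\,c-1}>c$ for all $c\geq 3$ by a short induction (base case $2^2=4>3$, inductive step doubling the left side while the right side increases by one), which rules out every $c\geq 3$. This leaves only $c=2$, for which $b^{\,1}=2$ forces $b=2$, giving the triples $(a,2,2)$ with $a\geq 2$, i.e. $S_3$. This inequality step is the crux, since it is what converts an a priori infinite search into a finite check.

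Finally I would confirm that $S=S_1\cup S_2\cup S_3$ and that the union is disjoint. Disjointness is immediate from the coordinates: $S_1$ is singled out by first coordinate $1$ whereas $S_2$ and $S_3$ have first coordinate at least $2$, while $S_2$ and $S_3$ are separated by their third coordinates $1$ and $2$ respectively. Since the case split on $x$ and the subsequent case split on $(b,c)$ are exhaustive, every solution lies in exactly one of the three sets, completing the argument.
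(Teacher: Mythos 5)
Your proof is correct and complete. One thing worth knowing: the paper itself never proves Theorem 1 --- it merely cites the solution published in \cite{2} --- so there is no in-paper argument to compare against line by line. Your route is, however, exactly the one the paper's own toolkit supports. The reduction (for $x = a \geq 2$) of $a^{(b^c)} = a^{bc}$ to $b^c = bc$, and then to $b^{c-1} = c$ when $b,c \geq 2$, is the same reduction the paper performs in the proof of Theorem 5(ii), where the analogous equation $b^{a-1} = a$ arises; and your induction establishing $2^{c-1} > c$ for all $c \geq 3$ is precisely part (ii) of the paper's Lemma 1, while part (i) of that lemma ($b^{n-1} > n$ for $b \geq 3$, $n \geq 2$) gives an alternative, slightly more direct way to dispose of the cases $b \geq 3$ without the lower bound $b^{c-1} \geq 2^{c-1}$. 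Your treatment of the degenerate cases ($x=1$, $b=1$, $c=1$), the fact that your reduction proceeds by equivalences so the converse inclusion is automatic, and the disjointness check via first and third coordinates are all sound, so the argument stands on its own as a self-contained proof of the theorem that the paper leaves to an external reference.
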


Motivated by mayhem problem M429, in this work we tackle another four
exponential, three-variable diophantine equations.  These are:

\vspace{.15in}

\begin{equation}
x^{(y^z)} = x^{(z^y)}, \label{E2}
\end{equation}

\begin{equation} 
x^{(y^z)} = y^{xz}, \label{E3}
\end{equation}

\begin{equation}
x^{yz} = y^{xz}, \label{E4}
\end{equation}

\noindent and

\begin{equation}
x^{(y^z)} = z^{xy} \label{E5}
\end{equation}

In Section 2, we state Theorems 2, 3, 4, and 5.  In Theorems 2, 3 and 4, the
solutions sets of the diophantine equations (\ref{E2}), (\ref{E3}), and
(\ref{E4}) are stated.

These three solution sets are determined with the aid of the two-variable
exponential diophantine equation found in Section 3, whose solution set is
given in Result 2.

The proofs of Theorems 2,3, and 4, are given in Section 4.  The proof of
Theorem 5 is presented in Section 5.  In Theorem 5, some solutions to equation
(\ref{E5}) are given.

\section{The four theorems}

\begin{theorem} Consider the three-variable diophantine equation (over ${\Bbb
    Z}^+$),

$$ x^{(y^z)}  = x^{z^y}.
$$

\noindent Let $S$ be the solution set of this equation.

Then, $S = S_1 \bigcup S_2 \bigcup S_3 \bigcup S_4 \bigcup S_5$, where 

\vspace{.15in}

$
\begin{array}{rcll}
S_1 & = & \left\{\left. (1,b,c)\right| b,c \in {\Bbb Z}^+ \right\}& {\rm where}\\
\\
S_2 & = & \left\{ \left. (a,1,1)\right|  a\geq 2, a \in {\Bbb Z}^+ \right\} & \\
\\
S_3 & = & \left\{\left. (a,b,b) \right| a \geq 2, b\geq 2, a, b \in {\Bbb
  Z}^+\right\}&\\
\\
S_4 & = & \left\{ \left. (a,4,2)\right| a \geq 2,\ a \in {\Bbb Z}^+\right\}&\\
\\
S_5 & = & \left\{ \left. (a,2,4)\right| a \geq 2, a \in {\Bbb Z}^+ \right\} & 
\end{array}
$
\end{theorem}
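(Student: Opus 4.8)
The plan is to separate the base value $x=1$ from the case $x \ge 2$, since these behave in completely different ways. When $x = 1$, both sides equal $1$ regardless of the exponents, so every triple $(1,b,c)$ with $b,c \in {\Bbb Z}^+$ is a solution; this is exactly $S_1$, and no further analysis is needed in this case. When $x \ge 2$, the map $n \mapsto x^n$ is strictly increasing on ${\Bbb Z}^+$, hence injective, so the equation $x^{(y^z)} = x^{(z^y)}$ is equivalent to the single exponent equation
$$ y^z = z^y. $$
Thus for $x \ge 2$ the entire problem collapses onto the two-variable equation $y^z = z^y$, whose solution set is furnished by Result 2 of Section 3.

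First I would record the solutions of $y^z = z^y$. The trivial family is $y = z$ (any common value), which always satisfies the equation. For the remaining solutions I would assume $y \neq z$, say $y < z$, and argue arithmetically rather than analytically. Writing $d = \gcd(y,z)$, $y = da$, $z = db$ with $\gcd(a,b) = 1$ and $a < b$, and taking $d$-th roots of $y^z = z^y$ reduces it to $d^{\,b-a}\,a^{b} = b^{a}$. Since $\gcd(a,b)=1$ forces $\gcd(a^b,b^a)=1$, and $a^b$ divides the right-hand side $b^a$, we must have $a = 1$; the equation then becomes $d^{\,b-1} = b$ with $d \ge 2$ and $b \ge 2$.

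The step I expect to be the main obstacle is the final bounding argument: ruling out all but one solution of $d^{\,b-1} = b$. Here I would invoke the elementary inequality $2^{\,b-1} > b$ for $b \ge 3$ (an easy induction), which gives $d^{\,b-1} \ge 2^{\,b-1} > b$ and so excludes every $b \ge 3$. Only $b = 2$ survives, forcing $d = 2$ and hence $\{y,z\} = \{2,4\}$. Combining this with the trivial family yields the complete solution set of $y^z = z^y$: either $y = z$, or $\{y,z\} = \{2,4\}$.

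Finally I would reassemble the pieces for $x \ge 2$. The case $y = z$ splits, according to the common value, into $y = z = 1$ (the family $S_2$) and $y = z \ge 2$ (the family $S_3$); the ordered pair $(y,z) = (4,2)$ gives $S_4$ and $(y,z) = (2,4)$ gives $S_5$. Together with $S_1$ from the base case $x = 1$, this establishes $S = S_1 \cup S_2 \cup S_3 \cup S_4 \cup S_5$. The converse inclusion is routine: each listed family is checked directly to satisfy $y^z = z^y$ (and therefore the original equation), with $S_1$ handled by the identity $1^m = 1$.
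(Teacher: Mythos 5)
Your proof is correct, and its outer skeleton coincides with the paper's: isolate $x=1$ (giving $S_1$), use injectivity of $n \mapsto x^n$ for $x \ge 2$ to collapse the equation to $y^z = z^y$, and then classify the solutions of that two-variable equation. The genuine difference lies in how that classification is obtained, and in how uniformly it is applied. The paper performs a finer preliminary case split --- it treats $y=1$ and $z=1$ separately by inspection ($a = a^c$, respectively $a^b = a$) and invokes its Result 2 only when $y,z \ge 2$; moreover, Result 2 itself is not proved from scratch but read off from Sierpinski's parametrization of the \emph{rational} solutions of $u^v = v^u$ (Result 1), which is cited from the literature. You instead prove the integer classification directly: writing $y = da$, $z = db$ with $\gcd(a,b)=1$ and $a < b$, extracting $d$-th roots to obtain $d^{\,b-a}a^{b} = b^{a}$, using coprimality of $a^b$ and $b^a$ to force $a = 1$, and then eliminating every $b \ge 3$ from $d^{\,b-1} = b$ via the induction inequality $2^{\,b-1} > b$. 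This recovers exactly the content of the paper's Result 2 (either $y = z$ or $\{y,z\} = \{2,4\}$) by elementary divisibility alone, so your argument is self-contained where the paper leans on an external and strictly stronger result; the paper's route, in exchange, gets the lemma essentially for free once Sierpinski's theorem is accepted. A further small dividend of your version is that applying the classification uniformly to all $y,z \ge 1$ makes the degenerate family $S_2$ (from $y = z = 1$) fall out of the same lemma rather than requiring separate inspection. (Two pedantic points you should make explicit: $b \ge 2$ follows from $b > a = 1$, and then $d \ge 2$ follows from $d^{\,b-1} = b \ge 2$, which is what licenses the bound $d^{\,b-1} \ge 2^{\,b-1}$.)
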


\vspace{.15in}

\begin{theorem} Consider the three-variable diophantine equation (over ${\Bbb
    Z}^+$),

$$
x^{(y^z)} = y^{xz}
$$

Let $S$ be the solution set of this equation.  Then, $S = S_1 \bigcup S_2
\bigcup S_3 \bigcup S_4 \bigcup S_5$ where

$$\begin{array}{lrcl}
&S_1&  =&  \left\{ \left.(1,1,c)\right| c \in {\Bbb Z}^+\right\}, \\
\\
&S_2 & = & \left\{ \left. (a,a,1)\right| a \geq  2, a \in {\Bbb Z}^+\right\} \\
\\
{\rm (singleton\ set)} & S_3 & =&  \left\{( 4,2,1)\right\},\\
\\
{\rm (singleton\ set)} & S_4 & = & \left\{(2,4,1)\right\} \\
\\
& S_5 & = & \left\{ \left. (b^c,b,c)\right|b \geq 2, c \geq 2, b,c\in {\Bbb
  Z}^+\right\}
\end{array}
$$
\end{theorem}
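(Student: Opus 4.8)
\medskip
\noindent\emph{Proof proposal.} The plan is to strip off the degenerate cases and then reduce the surviving case $x,y\ge 2$ to a single two-variable relation. If $x=1$ then the left-hand side equals $1$, forcing $y^{z}=1$ and hence $y=1$; conversely $x=y=1$ always works, so this case contributes exactly $S_1=\{(1,1,c)\}$. If instead $y=1$ and $x\ge 2$, the right-hand side is $1$ while the left-hand side is $x\ge 2$, so no solutions arise. Hence from now on I would assume $x,y\ge 2$.

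\medskip
For $x,y\ge 2$ the equation $x^{(y^{z})}=y^{xz}$ is an equality of integers $\ge 2$, so $x$ and $y$ must have the same prime divisors, and comparing the exponent of each prime on the two sides shows that the ratio (exponent in $x$)$/$(exponent in $y$) is one and the same rational number for every prime. Writing this ratio in lowest terms as $r/s$ and letting $g\ge 2$ be the integer all of whose prime exponents are the corresponding exponents of $x$ divided by $r$, I obtain the structural normal form $x=g^{r}$, $y=g^{s}$ with $\gcd(r,s)=1$. Substituting and equating the exponents of the common base $g$ collapses the three-variable equation to $r\,g^{sz}=sz\,g^{r}$, i.e. $g^{r}/r=g^{sz}/(sz)$. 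This is exactly the two-variable relation of Section~3 (Result~2).

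\medskip
The final step analyses the collision $f(r)=f(sz)$ for the function $f(t)=g^{t}/t$ on positive integers. For $g\ge 3$ the function $f$ is strictly increasing for $t\ge 1$, hence injective, while for $g=2$ one has the single coincidence $f(1)=f(2)=2$ with $f$ strictly increasing thereafter; so $f(r)=f(sz)$ forces either $r=sz$, or else $g=2$ and $\{r,sz\}=\{1,2\}$. In the first case $r=sz$ with $\gcd(r,s)=1$ forces $s=1$, $r=z$, giving $x=g^{z}$, $y=g$: for $z=1$ this is $S_2$ and for $z\ge 2$ this is $S_5$ (with $b=g$, $c=z$). The exceptional case $g=2$, $\{r,sz\}=\{1,2\}$ is a short finite enumeration of $(r,s,z)$ producing a handful of sporadic triples, among them $(4,2,1)\in S_3$ and $(2,4,1)\in S_4$.

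\medskip
I expect the main obstacle to be this last, finite, exceptional case rather than the structural reduction. The injectivity/monotonicity analysis of $f(t)=g^{t}/t$ is where all the arithmetic subtlety concentrates: the lone coincidence $f(1)=f(2)$ at $g=2$ is precisely what creates every non-diagonal solution, so the enumeration must be carried out carefully and each resulting triple checked directly against the lists $S_3,\dots,S_5$. In particular the diagonal subcase $r=s=1$ (i.e. $x=y=2$) deserves close attention, since there the relation $2^{z}=2z$ has the small solution $z=2$ in addition to $z=1$, and one must verify exactly which of the resulting triples falls into a listed set. Once this finite reconciliation is settled, the remaining work is routine bookkeeping of the parametrization $x=g^{r}$, $y=g^{s}$ against $S_1\cup\cdots\cup S_5$.
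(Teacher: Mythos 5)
Your proposal is correct in substance but follows a genuinely different route from the paper, and it is \emph{more} careful than the paper at the decisive point. The paper's proof is much shorter: it rewrites $y^{xz}=(y^z)^x$, so that for $x,y\ge 2$ the equation reads $x^{(y^z)}=(y^z)^x$, i.e.\ $u^v=v^u$ with $u=x$, $v=y^z$, and then quotes Result 2 directly; no prime factorization is needed. Your common-base normal form $x=g^r$, $y=g^s$ with $\gcd(r,s)=1$, followed by the monotonicity analysis of $f(t)=g^t/t$, is a valid self-contained substitute, though one remark is a mislabel: the relation $g^r/r=g^{sz}/(sz)$ is \emph{not} ``exactly'' the relation of Result 2 (that result concerns $u^v=v^u$). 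This does not hurt you, because what actually carries your argument is your own analysis of $f$, which is sound: $f(t+1)/f(t)=gt/(t+1)>1$ for $g\ge 3$, $t\ge 1$, while for $g=2$ the only collision is $f(1)=f(2)$.

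The case you flag for ``close attention'' is exactly where the theorem itself breaks. Your exceptional branch $g=2$, $(r,sz)=(1,2)$ with $(s,z)=(1,2)$ produces the triple $(2,2,2)$, which is a genuine solution, since $2^{(2^2)}=16=2^{2\cdot 2}$, yet it lies in none of $S_1,\ldots,S_5$: membership in $S_5$ would require $2=2^2$. The paper's own proof derives this very triple (the case ``$a=2$ and $b^c=4$, which gives $a=2=b=c$'') and then asserts, incorrectly, that every derived solution belongs to the five listed sets. So the statement as printed is false, and no proof of it can close; the correct conclusion, which your scheme yields once the finite enumeration is carried out, is $S=S_1\cup S_2\cup S_3\cup S_4\cup S_5\cup\{(2,2,2)\}$. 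With the list amended this way, your argument (and the paper's, with the omission repaired) goes through; your remaining enumeration $(4,2,1)\in S_3$ and $(2,4,1)\in S_4$, and the main family $(g^z,g,z)$ giving $S_2$ for $z=1$ and $S_5$ for $z\ge 2$, is complete and correct.
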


\begin{theorem} Consider the three-variable diophantine equation (over ${\Bbb
    Z^+} $

$$
x^{yz} = y^{xz}.
$$

Let $S$ be its solution set.  Then,

$$
 S  =  S_1 \bigcup S_2 \bigcup S_3 \bigcup S_4 \bigcup S_t \bigcup S_6
  \bigcup S_7,
$$

\noindent where

$$
\begin{array}{lrcl}
& S_1 & = & \left\{ \left. (1,1,c)\right| c \in {\Bbb Z}^+\right\} \\
\\
& S_2 & = & \left\{ \left. (a,a,1)\right| a \geq 2, a \in {\Bbb Z}^+\right\}\\
\\
{\rm (singleton \ set)} & S_3 & = & \left\{ (4,2,1)\right\},\\
\\
{\rm (singleton \ set)} & S_4 & = & \left\{ (2,4,1)\right\}\\
\\
& S_5 & = & \left\{\left. (a,a,c)\right| a\geq 2, c\geq 2, a,c \in {\Bbb
  Z}^+\right\}\\
\\
& S_6 & = & \left\{ \left. (4,2,c)\right| c \geq 2,\ c\in {\Bbb Z}^+\right\}
\\
\\
& S_7 & = & \left\{ \left. (2,4,c) \right| c \geq , \ c \in {\Bbb Z}^+\right\}
\end{array}
$$

\end{theorem}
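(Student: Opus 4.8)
The plan is to reduce the three-variable equation to the classical two-variable equation $x^y = y^x$, whose solution set is exactly what Result~2 supplies, and then to reintroduce the free variable $z$. The key observation is that both sides are perfect $z$-th powers: $x^{yz} = (x^y)^z$ and $y^{xz} = (y^x)^z$. Since $t \mapsto t^z$ is strictly increasing on $\mathbb{Z}^+$, for positive integers $u,v$ we have $u^z = v^z$ if and only if $u = v$. Consequently
$$ x^{yz} = y^{xz} \iff (x^y)^z = (y^x)^z \iff x^y = y^x, $$
and this equivalence holds for \emph{every} positive integer $z$. Thus $(x,y,z) \in S$ exactly when the pair $(x,y)$ satisfies $x^y = y^x$ and $z$ ranges freely over $\mathbb{Z}^+$; the variable $z$ is completely decoupled from the equation.

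First I would invoke Result~2 to record the positive-integer solutions of $x^y = y^x$: these are precisely the diagonal pairs $(a,a)$ with $a \in \mathbb{Z}^+$, together with the two off-diagonal pairs $(2,4)$ and $(4,2)$. (Should a self-contained argument be wanted, the standard route studies $f(t) = (\ln t)/t$, which increases on $(0,e)$ and decreases on $(e,\infty)$; hence a solution with $x \ne y$ forces $\min\{x,y\} \le 2$, and a short finite check isolates $\{2,4\}$.) Taking the Cartesian product of this set with $z \in \mathbb{Z}^+$ then yields the full solution set $S$, and the remaining task is purely organizational.

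Finally I would split $S$ into the seven listed pieces. The diagonal pairs $(a,a)$ divide into the slice $a=1$, giving $S_1 = \{(1,1,c)\}$, and the slice $a \ge 2$, which I subdivide by $z$ into $S_2 = \{(a,a,1): a \ge 2\}$ (case $z=1$) and $S_5 = \{(a,a,c): a,c \ge 2\}$ (case $z \ge 2$). The pair $(4,2)$ likewise contributes $S_3 = \{(4,2,1)\}$ and $S_6 = \{(4,2,c): c \ge 2\}$, and the pair $(2,4)$ contributes $S_4$ and $S_7$ symmetrically. I would then check that the seven sets are pairwise disjoint --- they are separated by the first two coordinates (diagonal versus $(4,2)$ versus $(2,4)$) and, within each family, by whether $z = 1$ or $z \ge 2$ --- and that their union is all of $S$. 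The main point to watch is simply that $S_1$ is the $a=1$ portion of the diagonal, listed apart from $S_2 \cup S_5$; given Result~2, there is no real obstacle here, the entire content of the theorem being carried by the $z$-th-root reduction together with the classical classification of $x^y = y^x$.
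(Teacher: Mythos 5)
Your proof is correct, and it rests on the same two pillars as the paper's: Result~2 (the classification of $x^y=y^x$ over $\mathbb{Z}^+$) and the observation that equal $z$-th powers of positive integers force equal bases. The difference is organizational, and yours is the cleaner route. The paper runs a four-way case analysis on which variables equal $1$: it treats $a=1$ directly ($1=b^c$ forces $b=1$), rules out $a\geq 2,\ b=1$, applies Result~2 to $a^b=b^a$ when $c=1$, and only in the final case $a,b,c\geq 2$ invokes the root-extraction step, writing $(a^b)^c=(b^a)^c$ and cancelling the exponent $c$ because $a^b,b^a>1$. You instead notice that injectivity of $t\mapsto t^z$ holds on \emph{all} of $\mathbb{Z}^+$, not just on integers exceeding $1$, so the equivalence
$$
x^{yz}=y^{xz}\ \Longleftrightarrow\ x^y=y^x
$$
is uniform in $x,y,z$; this decouples $z$ at the outset and absorbs the paper's first three cases (e.g.\ $x=1$ reduces to $1=y$, exactly the paper's $1=b^c$ conclusion) into the single appeal to Result~2, leaving only the bookkeeping of splitting the product set into the seven listed pieces. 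What the paper's organization buys is that its cases line up one-to-one with the sets $S_1,\dots,S_7$, so the partition appears automatically; what your argument buys is brevity and the explicit recognition that $z$ plays no role in the equation, which is the real content of the theorem.
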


\vspace{.15in}

\begin{theorem} Consider the  three-variable equation (over ${\Bbb Z}^+$)

$$
x^{(y^z)} = z^{xy}
$$

\begin{enumerate}
\item[(i)]  Let $S$ be the set of those solutions, $(x,y,z)$ such that at
  least one of $x,y$, or $z$ is equal to $1$.  Then
 
$$
S = \left\{ \left. (1,b,1)\right| b \in {\Bbb Z}^+\right\}
$$

\item[(ii)] The only solution $(x,y,z)$ to the above equation, such that $x
  \geq 2,\ y \geq 2,\ z \geq 2$, and with $x=z$, is the triple $(2,2,2)$

\item[(iii)]  Let $F$ be the family of solutions $(x,y,z)$ such that $x \geq
  2,\ y \geq 2,\ z\geq 2$ and with $y = z \neq x$.  Then

$$
F=\left\{ \left.(b^b,b,b) \right| b \geq 2,\ b \in {\Bbb Z}^+\right\}
$$
\end{enumerate}
\end{theorem}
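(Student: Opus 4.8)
The plan is to handle the three parts separately, since each imposes its own structural restriction on $(x,y,z)$. For part (i) I would run through the sub-cases $x=1$, $y=1$, $z=1$. If $x=1$ the left side is $1$ and the right side is $z^{y}$, forcing $z=1$; if $z=1$ the right side is $1$ and the left side is $x^{y}$, forcing $x=1$; and if $y=1$ the equation reads $x=z^{x}$, which is impossible for $x\ge 2$ since then $z^{x}\ge 2^{x}>x$, leaving $x=z=1$. Each case lands on a triple $(1,b,1)$, and conversely every such triple obviously works. For part (ii), substituting $z=x$ gives $x^{(y^{x})}=x^{xy}$; since $x\ge 2$ the base exceeds $1$, so I may equate exponents to obtain $y^{x}=xy$, i.e.\ $y^{x-1}=x$. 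With $y\ge 2$ this yields $2^{x-1}\le x$, which fails for every $x\ge 3$, so $x=2$ and then $y=2$, giving the unique triple $(2,2,2)$.

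Part (iii) is the substantive case. Setting $z=y$ reduces the equation to $x^{(y^{y})}=y^{xy}$ with $x,y\ge 2$ and $x\neq y$. The key first step is that $x$ and $y$ must share the same prime support, since any prime dividing one side divides the other. Writing $x=\prod p_i^{a_i}$ and $y=\prod p_i^{b_i}$ over their common primes and comparing $p_i$-adic valuations gives $a_i\,y^{y}=b_i\,xy$ for every $i$, so after cancelling $y$ each ratio $a_i/b_i$ equals the single value $x/y^{\,y-1}$. Writing this common ratio in lowest terms as $m/n$ then forces the common-base representation $x=d^{m}$, $y=d^{n}$ for some integer $d\ge 2$ with $\gcd(m,n)=1$. (Alternatively, one may first take $y$-th roots, reducing the equation to the two-variable relation $x^{(y^{y-1})}=y^{x}$ and appealing to Result~2.)

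With this representation in hand, the identity $m/n = x/y^{\,y-1}$ becomes $m/n = d^{\,k}$ where $k=m-n(y-1)$, and I would split on the sign of $k$. The value $k=0$ gives $m=n=1$, i.e.\ $x=y$, which is excluded. For $k\ge 1$ the integrality of $d^{k}$ forces $n=1$ and $m=d^{k}$, hence $y=d$ and $d^{k}-k=d-1$; since $d^{k}-k$ is strictly increasing in $k$ for $d\ge 2$, only $k=1$ survives, producing $x=d^{d}$, $y=d$, namely the triple $(d^{d},d,d)$. For $k\le -1$ one gets $m=1$, $n=d^{|k|}$, and the defining relation forces $n(y-1)=1+|k|$; but here $y=d^{n}\ge 4$ and $n=d^{|k|}\ge 2$, so the left side grows exponentially against a right side linear in $|k|$, a contradiction. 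Renaming $d$ as $b$ then yields exactly $F=\{(b^{b},b,b)\mid b\ge 2\}$.

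I expect the main obstacle to be the rigorous passage in part (iii) from ``all the valuation ratios are equal'' to the clean parametrization $x=d^{m}$, $y=d^{n}$ with $\gcd(m,n)=1$, together with the elementary growth estimates needed to eliminate the cases $k\ge 2$ and $k\le -1$. Once that scaffolding is secured, the remaining verification is routine bookkeeping.
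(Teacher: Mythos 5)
Your proof is correct, and parts (i) and (ii) match the paper essentially step for step (the paper just cites its Lemma 1 for the growth facts $c^n>n$ and $b^{n-1}>n$ that you verify inline). Part (iii), however, takes a genuinely different route. The paper's argument is a three-line reduction: since $y^{xy}=(y^y)^x$, the equation with $y=z$ reads $x^{(y^y)}=(y^y)^x$, which is exactly the two-variable equation $u^v=v^u$ with $u=x$, $v=y^y$; its Result 2 then leaves only $x=4,\ y^y=2$ (impossible), $x=2,\ y^y=4$ (forces $x=y=2$, excluded since $y=z\neq x$), or $x=y^y$, i.e.\ the family $(b^b,b,b)$. You instead build the structure from scratch: equal prime support, equal valuation ratios $a_i/b_i=x/y^{\,y-1}$, the common-base representation $x=d^m$, $y=d^n$ with $\gcd(m,n)=1$, and a case analysis on $k=m-n(y-1)$; your gcd argument and the growth estimates killing $k\geq 2$ and $k\leq -1$ all check out. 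What each buys: the paper's route is short but leans on Result 2, whose proof (via Sierpinski's classification of rational solutions of $x^y=y^x$) is the paper's one imported ingredient; yours is longer but fully self-contained and elementary. One caution about your parenthetical aside: taking $y$-th roots to get $x^{(y^{y-1})}=y^x$ and ``appealing to Result 2'' does not work as stated, because $x^{(y^{y-1})}=y^x$ is \emph{not} of the form $u^v=v^u$; the correct one-line reduction is the exponent grouping $y^{xy}=(y^y)^x$, not root extraction. Since that remark is an aside and your main valuation argument is complete, the proof stands.
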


\section{A key exponential diophantine equation}

The diophantine equation, $x^y = y^x$, over the positive integers, is
instrumental in determining the solution sets of the diophantine equations
(\ref{E2}), (\ref{E3}), and (\ref{E4}).  The following, Result 1, can be found
in W. Sierpinski's book, ``Elementary Theory of Numbers'', (see reference
\cite{3}).  The proof is about half a page long.

\begin{result} Consider the two-variable equation, $x^y = y^x$, over the set
  of positive rational numbers, ${\Bbb Q}^+$.  Then all the solutions to this
  equation, with $x$ and $y$ being positive rationals, and with $y > x$, are
  given by

$$
x = \left( 1+\dfrac{1}{n}\right)^n,\ \ \ y= \left(
1+\dfrac{1}{n}\right)^{n+1},
$$

\noindent where $n$ is a positive integer: $n = 1,2,3,\ldots$\ .
\end{result}

A simple or cursory examination of the formulas in Result 1 easily leads to
Result 2.  Observe that these formulas can be written in the form,

$$
x=\left(\dfrac{n+1}{n}\right)^n ,\ \ \ y = \left(\dfrac{n+1}{n} \right)^{n+1}.
$$

For $n = 1$, we obtain the integer solution $x = 2$ and $y = 4$.  However,
for $n \geq 2$, the number $\dfrac{n+1}{n}$ is a proper rational, i.e., a
rational which is not an integer.

This is clear since $n$ and $n+1$ are relatively prime, and $n \geq 2$.  Thus,
since for $n \geq 2$, $\dfrac{n+1}{n}$ is a proper rational, so must be any
positive integer power of $\dfrac{n+1}{n}$.  This observation takes us
immediately to Result 2 below.

\begin{result}  Consider the two-variable diophantine equation (over ${\Bbb
    Z}^+$)

$$
x^y = y^x.
$$

Let $S$ be its solution set.  Then, $S = S_1 \bigcup S_2 \bigcup S_3$.  Where

$$
\begin{array}{lrcl} & S_1 & = & \left\{ \left. (a,a)\right| a \in {\Bbb Z}^+
  \right\},\\
\\
{\rm (singleton\ set)} & S_2 & = & \left\{ (4,2)\right\},\\
\\
{\rm and\ (singleton\ set)} & S_3 & = & \left\{ (2,4)\right\}
\end{array}
$$
\end{result} 

Result 2 is used in the proofs of Theorems  2, 3, and 4 below.

\section{Proofs of Theorems 2, 3, and 4}

\begin{enumerate}\item[(1)] \begin{proof} {\bf Theorem 2} Suppose that $(a,b,c)$  is a solution to equation (\ref{E2}).  We have 

\begin{equation}
a^{(b^c)} = a^{(c^b)} \label{E6}
\end{equation}

\noindent If $a=1$, then $b$ and $c$ can be arbitrary positive integers; and
(\ref{E6}) is satisfied.

\noindent If $b=1$ and $a \geq 2$, then by (\ref{E6}) we get 

\vspace{.15in}

\hspace{2.25in} $a=a^c$.  \hfill (6a) 

\vspace{.15in}

\noindent Since $ a \geq 2$, by inspection, we see that (6a) is satisfied only
when $c = 1$.

So, we obtain the solutions of the form $(a,1,1)$ with $a \geq 2$.  If $a \geq
2, \ b\geq 2$, and $c =1$, equation (\ref{E6}) yields

$$
a^b = a,
$$

\noindent which is impossible with $a \geq 2$ and $b \geq 2$.

Finally, assume that $a \geq 2,\ b \geq 2$, and $c\geq 2$ in (\ref{E6}).  Then
(\ref{E6}) $\Leftrightarrow$ (since $a \geq 2$) $b^c = c^b$; and by Result 2,
it follows that either $b=4$ and $c=2$; or $b=2$ and $c =4$; or $b = c$.  We
have shown that if $(a,b,c)$ is a positive integer solution of equation
(\ref{E2}), then $(a,b,c)$ must belong to one of the sets $S_1,S_2,S_3,S_4$, or $S_5$.
Conversely, a routine calculation shows that any member of these five sets is
a solution to (\ref{E2}).
\end{proof}
\item[(2)] \begin{proof} {\bf Theorem 3}. Let $(a,b,c)$ be a solution to equation (\ref{E3}).  We then have,

\begin{equation}
a^{(b^c)} = b^{ac} \label{E7}
\end{equation}

\noindent If $a =1$, then by (\ref{E7}), $1=b^c$, which in turn  implies
$b=1$; and $c$ an arbitrary positive integer.  

\noindent If $a \geq 2$ and $b=1$, (\ref{E7}) becomes impossible for any value
of $c$.  If $a \geq 2,\ b\geq 2$, and $c = 1$, (\ref{E7}) yields $a^b = b^a$;
and by Result 2 we must have either $a = 4$ and $b=2$, or $a = 2$ and $b=4$;
or $a =b$.  If $a \geq 2,\ b\geq 2,\ c \geq 2$.  Then by (\ref{E7}),

\vspace{.15in}

\hspace{2.25in} $a^{(b^c)} = (b^c)^a$ \hfill (7a)

\vspace{.15in}

\noindent Combining (7a) with Result 2 implies that either $a=4$ and $b^c=2$,
which is impossible since $b \geq 2$ and $c \geq 2$, or that $a=2$ and $b^c =
4$, which gives $a = 2 = b = c$.  Or, the third possibility, $a = b^c$.  We
have shown that if $(a,b,c)$ is a positive integer solution of equation
(\ref{E3}), then it must belong to one of the sets $S_1,\ S_2,\ S_3,\ S_4$ or
$S_5$. Conversely, a routine calculation shows that any member of these five
sets is a solution to (\ref{E3}). 
\end{proof}

\item[(3)] \begin{proof} {\bf Theorem 4}.  Let $(a,b,c)$ be a positive integer
  solution to equation (\ref{E4})

\begin{equation}
a^{bc} = b^{ac} \label{E8}
\end{equation}

\noindent If $a=1$, we obtain $1=b^c$; and so $b=1$, with $c$ being an
arbitrary positive integer.

\noindent If $a \geq 2$ and $b=1$, (\ref{E8}) gives $a^c=1$, which is
impossible since $a \geq 2$.

\noindent If $a \geq 2,\ b \geq 2$, and $c=1$, we obtain from (\ref{E8}) 

\vspace{.15in}

\hspace{1.0in} $a^b = b^a$ \hfill (8a)

\vspace{.15in}

\noindent Equation (8a), combined with Result 2, implies that either $a = 4$ and
$b=2$; or $a = 2$ and $b=4$; or $a = b$.  If $a \geq 2,\ b\geq 2$, and $c \geq
2$, we have from (\ref{E8})

\vspace{.15in}

\hspace{1.0in} $a^{bc} = b^{ac} \Leftrightarrow (a^b)^c = (b^a)^c$ \hfill (8b)

\vspace{.15in}

\noindent Equation (8b) demonstrates that the $c$th powers of the positive
integers $a^b$ and $b^a$ are equal.  Since these two integers are greater than
$1$, equation (8b) implies

$$a^b=b^a$$

\noindent which once more, when combined with Result 2, implies either $a = 4$
and $b=2$ or $a=2$ and $b=4$; or $a =b$.  

We have shown that if $(a,b,c)$ is a positive integer solution of equation
(\ref{E4}), it must belong to one of the sets $S_1,\ S_2,\ S_3,\ S_4,\ S_5, \
S_6,$ or $S_7$.  Conversely, a routine calculation establishes  that any
member of these seven sets is a solution to (\ref{E4}).
\end{proof}

\item[(5)] {\bf Proof of Theorem 5}  The following lemma can be easily
  proved by using mathematical induction.  We omit the details.  We will use
  the lemma in the proof of Theorem 5.

\begin{lemma} 
\ \ \ \ 
\begin{enumerate} 
\item[(i)]  If $b \geq 3$, then $b^{n-1} > n$ for all positive integers $n
  \geq 2$.

\item[(ii))] $2^{n-1} > n$, for all positive integers $n \geq 3$.

\item[(iii)]  If $c \geq 2$, then $c^n > n$, for all positive integers $n$.
  
\end{enumerate}
\end{lemma}

\begin{proof} {\bf Theorem 5} 
\ \ \ \ 
\begin{enumerate}
\item[(i)] Let $(a,b,c)$ be a solution to equation (\ref{E5}) with at least
  one of $a,b,c$ being equal to $1$.

\vspace{.15in}

\noindent If $a=1$, (\ref{E5}) implies $1 = c^b$, and so $c=1$ as well; and
$b$ is an arbitrary positive integer.

\noindent If $b=1$ and $a \geq 2$ we get $a =c^a$ which is impossible if $c
\geq 2$, by Lemma  1(iii); and clearly,  $c \neq 1$ since $a \geq 2$.

Also, the case $b \geq 2, \ a \geq 2$, and $c=1$ is ruled out by inspection.
We conclude that if $(a,b,c)$ is a solution to (\ref{E5}), with one of $a,b,c$
being $1$, then it must be of the form $(1,b,1)$.  Conversely, a
straightforward calculation established that $(1,b,1)$ is a solution of
(\ref{E5}) for every positive integer $b$.

\item[(ii)]  Let $(a,b,c)$ be a solution to (\ref{E5}) with $a \geq 2,\ b\geq
  2,\ c\geq 2$, and $a =c$.  We have, by (\ref{E5}), $a^{(b^a)} = a^{ab}
  \Leftrightarrow $ (since $a\geq 2$) $b^a = ab$, or equivalently, $b^{a-1} =
  a$, which, when combined with Lemma 1, parts (i) and (ii), implies that
  either $b \geq 3$ and $a=1$; which is ruled out since $a \geq 2$; or
  alternatively, $b = 2$ and $a \leq 2$ which gives $a = 2$.  We obtain $a = b
  = c = 2$.  Conversely, $(2,2,2)$ is a solution to equation (\ref{E5}), $2^4
  = 2^4$.

\item[(iii)]  Let $(a,b,c)$ be a solution to (\ref{E5}) with $a \geq 2,\ b
  \geq 2,\ c \geq 2$, and with $b = c \neq a$.  We have,

$$ a^{(b^b)} = b^{ab};
$$

\noindent  or equivalently,

\begin{equation} a^{(b^b)}  =  (b^{b})^a\label{E9}
\end{equation}

Equation (\ref{E9}) combined with Result 2 implies that, either $a = 4$ and
$b^b = 2$ or $a = 2$ and $b^b=4$; or $a = b^b$.  The first possibility is
ruled out since $b^b \geq 2^2 > 2$, because $b \geq 2$ .  The second
possibility yields $b^b=4,\ b=2$, but then also have $a=2$ and so $a = b=c=2$,
contrary to $b = c \neq a$.  The third possibility establishes $(a,b,c) =
(b^b,b,b)$.  Conversely, $(b^b,b,b)$ is a solution to (\ref{E5}) for any
positive integer $b \geq 2$.  Both sides of (\ref{E5}) are equal to
$b^{(b^{b+1})}$. 
\end{enumerate}
\end{proof}
\end{enumerate}

\newpage

\end{document}